\newtheorem{theorem}{Theorem}[section]
\newtheorem{proposition}[theorem]{Proposition}
\newtheorem{definition}[theorem]{Definition}
\newcommand{\qed}{\hfill $\square$\medskip}
\begin{document}

\def\nt{\noindent}

\title{Some results on Hamming graphs and an extended Hamming graphs}

\author{
	Ali Zafari$^1$\footnote{Corresponding author} \and	
Saeid Alikhani$^{2}$
}


\maketitle

\begin{center}

$^1$Department of Mathematics, Faculty of Science,
Payame Noor University, P.O. Box 19395-4697, Tehran, Iran\\ 
{\tt zafari.math@pnu.ac.ir}
\medskip

$^{2}$Department of Mathematical Sciences, Yazd University, 89195-741, Yazd, Iran\\
{\tt alikhani@yazd.ac.ir}

\end{center}
	
\begin{abstract}
	In this paper we first obtain the spectrum of the folded hypercube in a new approach. Then we introduce a new family of graphs called the extended Hamming graph, denoted by $EH(n,2^n)$, which is constructed from the well-known Hamming graph $H(n,2^n)$.
The graph $EH(n,2^n)$ shares the same vertex set as $H(n,2^n)$ but includes additional edges, called complementary edges, connecting each $n$-tuple vertex $u$ to its complement $u^c$, where $u^c$ is defined such that the sum of each two corresponding coordinates of $u$ and $u^c$ equals $2^n-1$. 
We investigate several algebraic and structural properties of this new family of graphs. Specifically, we show that the diameter of $EH(n,2^n)$ is $n$.
We prove that $EH(n,2^n)$ is a Cayley graph, but we demonstrate that it is not a distance regular graph. Finally, we determine the spectrum of 
$EH(n,2^n)$, showing that its eigenvalues are $\lambda_i\pm 1$, where $\lambda_i$ are the eigenvalues of the underlying Hamming graph $H(n,2^n)$. 
The multiplicity of each eigenvalue is explicitly calculated.
\end{abstract}

\noindent{\bf Keywords:} Hamming graph, Extended Hamming graph, Cayley graph.

\medskip
\noindent{\bf AMS Subj.\ Class.:}  05C25, 94C15.


\section{Introduction}
\label{sec:introduction}

Graph theory provides a powerful framework for modeling relationships between discrete objects, and certain families of graphs, due to their rich structure and symmetry, have been central to areas like computer science, coding theory, and network design. Among these, the Hamming graphs represent a fundamental class.

The Hamming graph $H(n,m)$ is defined on the vertex set $\Omega^n$, where $\Omega=\{0, 1, ..., m-1\}$.
Two vertices are adjacent if they differ in exactly one coordinate. It is well known that $H(n,m)$ is the Cartesian product of $n$ complete graphs 
$K_m$, i.e., $K_m\square \cdots \square K_m$. A notable special case is $H(n,2)$, which is the $n$-dimensional hypercube $Q_n$.
Variants of Hamming graphs, such as the folded hypercube $FQ_n$ was proposed first in \cite{Amawy-1}, have been introduced to enhance topological properties like diameter,
which can be halved in the folded hypercube compared to the standard hypercube. 

Hamming graphs are an important class of Cayley graphs \cite{Hahn-1,Liu-1}. These graphs have been and still are studied intensively because of their symmetry properties and their connections to communication networks, quantum physics, coding theory, and other areas (see, e.g., \cite{Hahn-1,Li-Liu-1}). However, only few results on generalized Hamming graphs have been obtained so far. In 2010, Sander studied the eigenspaces of  Hamming graphs and unitary Cayley graphs in some cases, see \cite{Sander-1}. In the same year, Klotz and Sander observed that $H(m_1, \dots, m_r; D)$ is an integral Cayley graph \cite{Klotz-1}. In 2023, Li, et al. provided  the square of generalized Hamming graphs by the properties of abelian groups \cite{Li-Liu-2}.
  
This paper focuses on the Hamming graph $H(n,2^n)$ and introduces a novel structure derived from it: the extended Hamming graph $EH(n,2^n)$.
The motivation for introducing $EH(n,2^n)$ is to explore a new highly symmetric family of graphs and to analyze how the addition of complementary edges impacts its algebraic and combinatorial properties. First, we state the following definition of the extended Hamming graph.

\begin{definition}
We denote the extended Hamming graph by $EH(n,2^n)$. The vertex set of $EH(n,2^n)$ is the same as that of $H(n,2^n)$. A complementary edge connects an $n$-tuple vertex $u=(u_1,\ldots,u_n)$ to its complement
$u^c=(2^n-1-u_1, \ldots, 2^n-1-u_n)$, provided they are distinct. So, the edge set of 
$EH(n,2^n)$ is $E(H(n,2^n))\cup \{u,u^c\}$. 
\end{definition} 
This construction is a generalization of the folded graph principles to the context of $H(n,2^n)$.
Since the valency of each vertex $v$ of the Hamming graph $H(n,m)$ is $n(m-1)$,
so the valency of each vertex $v$ of the Hamming graph $H(n,2^n)$ is $n(2^n-1)$. In particular,  the valency of each vertex $v$ of the extended Hamming graph $EH(n,2^n)$ is $n(2^n-1)+1$. We can verify for $n>1$ the Hamming graph  $H(n,2^n)$ is not bipartite.

The paper is structured as follows: 

Section 2 provides  preliminaries from graph theory and algebraic graph theory that are essential for understanding the main results. In Section 3, we obtain the spectrum of the folded hypercube.    Section 4 contains the main results about the extended Hamming graphs, including the proof of the diameter, the Cayley graph property, the non-distance-regularity, and the full spectrum calculation of $EH(n,2^n)$. Finally, we conclude in Section 5.  

\section{Preliminaries and Definitions}
This section introduces the fundamental concepts, notation, and prior results from graph theory and algebraic graph theory that are essential
for understanding the main results presented in this paper. In this paper, a graph $\Gamma=(V(\Gamma),E(\Gamma))$ is considered 
as an undirected simple graph with the vertex-set $V(\Gamma)$ and  the edge-set $E(\Gamma)$. For all the terminology
and notation not defined here, we follow \cite{{Biggs-1,Godsil-1}}.

The {diameter} of a connected graph is the maximum distance between any two vertices. 
The adjacency matrix of graph $\Gamma$ of order $n$, denoted by $A(\Gamma)$, is a $0$-$1$ matrix of order $n$ with entries $a_{ij}$ such that
$a_{ij} = 1$ if the $i^{th}$ and $j^{th}$ vertices are adjacent and $a_{ij} = 0$ otherwise. The eigenvalues of $A(\Gamma)$ are called the eigenvalues 
of $\Gamma$, and the collection of eigenvalues of $\Gamma$ with multiplicities is called the spectrum of $\Gamma$. A graph is integral \cite{Abdollahi-1,Ahmadi-1,Balinska-1,Harary-1,Wang-1} if all its eigenvalues are integers.

Let recall some definitions. 
Let $G$ be a finite group with identity $1$ and let $\Omega$ be a inverse-closed subset of $G$ with the properties:
\begin{itemize}
    \item $x \in \Omega \implies x^{-1} \in \Omega$,
    \item $1 \notin \Omega$.
\end{itemize}
The Cayley graph $\Gamma = \operatorname{Cay}(G, \Omega)$ is a  graph whose vertex-set and edge-set are defined as follows:
$$
V(\Gamma) = G, \qquad E(\Gamma) = \big\{ \{g, h\} \mid g^{-1}h \in \Omega \big\}.
$$


A regular graph $\Gamma$ with diameter $d$ is {distance regular} if, for any two vertices $u$ and $v$ at distance $r$, the following intersection numbers are constant:
\begin{itemize}
		\item $c_r = |\Gamma_{r-1}(v) \cap \Gamma_1(u)|,$
		\item $b_r = |\Gamma_{r+1}(v) \cap \Gamma_1(u)|.$
\end{itemize}
The partition of vertices based on distance from a fixed vertex $v$, denoted $\Gamma_r(v) = \{\Gamma_0(v), \ldots, \Gamma_d(v)\}$, is called the {distance partition}.

Let $n \ge 3$ be an integer. The hypercube $Q_n$ of dimension $n$ is the graph with the vertex set $\{(x_1, x_2, \dots, x_n) \mid x_i \in \{0,1\}\}$, where two vertices $(x_1, x_2, \dots, x_n)$ and $(y_1, y_2, \dots, y_n)$ are adjacent if and only if $x_i = y_i$ for all but one $i$.
The folded hypercube $FQ_n$ of dimension $n$ is the graph obtained from the hypercube $Q_n$ by adding edges, called complementary edges, between any two vertices $x = (x_1, x_2, \dots, x_n)$ and $y = (\bar{x}_1, \bar{x}_2, \dots, \bar{x}_n)$, where $\bar{1} = 0$ and $\bar{0} = 1$. 

Let $G$ and $H$ be two graphs with the adjacency matrices $A(G)$ and $A(H)$.
The Cartesian product of two graphs $G$ and $H$ has adjacency matrix:
$$
A(G \square H) = A(G) \otimes I_{n_2} + I_{n_1} \otimes A(H)
$$
where $n_1$ and $n_2$ are the numbers of vertices in $G$ and $H$, respectively, and $\otimes$ denotes the Kronecker product. This corresponds to the
tensor product in the space of linear operators.


\section{The spectrum of folded hypercube}

The spectrum of folded hypercube studied in \cite{Chen-1,Mirafzal-1}. Here in Theorem \ref{b.2}, we give the spectrum of $FQ_n$ with another approach. 
Let recall the spectrum of $Q_n$. 
\begin{theorem}\label{b.1}{\rm\cite{{Brower-1}}}
	The spectrum of hypercube  $Q_n$,  is
	$$
	\{\lambda_{i}^{m({\lambda_i})}|\ \lambda_{i}=n-2i,\ m({\lambda_i})=\binom{n}{i},\ 0\leq i\leq n\},
	$$
	where $n$ is the diameter of hypercube  $Q_n$. 
\end{theorem}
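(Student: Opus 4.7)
\medskip
\noindent\textbf{Proof proposal.} My plan is to exploit the Cartesian product decomposition $Q_n = K_2 \square K_2 \square \cdots \square K_2$ ($n$ factors), combined with the Kronecker-sum identity $A(G \square H) = A(G) \otimes I_{n_2} + I_{n_1} \otimes A(H)$ recorded in Section~2. The starting point is to note that $A(K_2)$ has eigenvalues $+1$ and $-1$, with orthogonal eigenvectors $(1,1)^{\top}$ and $(1,-1)^{\top}$ respectively.

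Iterating the Kronecker-sum formula, by induction on $n$, I would conclude that every eigenvector of $A(Q_n)$ can be taken to be a tensor product $w_{\epsilon_1}\otimes w_{\epsilon_2}\otimes \cdots \otimes w_{\epsilon_n}$ with $\epsilon_j\in\{+1,-1\}$ and $w_{\epsilon_j}$ the corresponding eigenvector of $A(K_2)$, and that its eigenvalue is $\epsilon_1+\cdots+\epsilon_n$. If exactly $i$ of the signs are $-1$, then the eigenvalue equals $(n-i)-i=n-2i$, which matches the claimed list $\lambda_i=n-2i$ for $0\le i\le n$.

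For multiplicities, the number of sign sequences with exactly $i$ entries equal to $-1$ is $\binom{n}{i}$, so $m(\lambda_i)\ge \binom{n}{i}$. Because tensor products of orthogonal bases form an orthogonal basis of $\mathbb{R}^{2^n}$, the eigenvectors so constructed already exhaust the spectrum, forcing equality. A final sanity check, $\sum_{i=0}^{n}\binom{n}{i}=2^n=|V(Q_n)|$, confirms that all eigenvalues are accounted for. The only step that is not purely bookkeeping is the Kronecker-sum identity for Cartesian products, but this is quoted in Section~2, so there is no real obstacle; the diameter statement $\mathrm{diam}(Q_n)=n$ appears merely as the range of the index $i$ and is itself a standard fact about the Hamming metric on $\{0,1\}^n$.
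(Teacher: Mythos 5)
Your proposal is correct. Note, however, that the paper does not prove this statement at all: Theorem~\ref{b.1} is quoted verbatim from the reference [Brouwer--Cohen--Neumaier], so there is no in-paper proof to compare against. Your argument is the standard self-contained derivation: write $Q_n = K_2 \square \cdots \square K_2$, iterate the Kronecker-sum identity $A(G\square H)=A(G)\otimes I + I\otimes A(H)$ (which the paper records in Section~2), observe that $A(K_2)$ has spectrum $\{+1,-1\}$, and conclude that the eigenvalues of $Q_n$ are the sums $\epsilon_1+\cdots+\epsilon_n$ with eigenvectors $w_{\epsilon_1}\otimes\cdots\otimes w_{\epsilon_n}$; a sign pattern with $i$ entries equal to $-1$ gives $n-2i$, the count $\binom{n}{i}$ is immediate, and the orthogonality of the $2^n$ tensor-product eigenvectors shows these exhaust the spectrum. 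All steps are sound, and the completeness argument (orthogonal basis of $\mathbb{R}^{2^n}$, plus the check $\sum_i\binom{n}{i}=2^n$) is exactly what is needed to turn the lower bound on multiplicities into an equality. It is worth remarking that your tensor-product decomposition is essentially the same mechanism the authors later deploy in the proof of Theorem~\ref{c.5} for the Hamming graph $H(n,2^n)$, so your proof is not only valid but stylistically consistent with the rest of the paper.
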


\begin{theorem}\label{b.2}
	The spectrum of folded hypercube  $FQ_n$, is
	
	$$
	\{\theta_{i}^{ m({\theta_i})}|\ \theta_{i}=\lambda_{i}+(-1)^i,\ m({\theta_i})=\binom{n}{i},\ 0\leq i\leq n\},
	$$
	where $\lambda_{i}$ is an eigenvalue of the hypercube  $Q_n$.
\end{theorem}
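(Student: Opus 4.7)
The plan is to realize $FQ_n$ as a Cayley graph on the abelian group $\mathbb{Z}_2^n$ and diagonalize its adjacency matrix in the character basis. Writing $A(FQ_n)=A(Q_n)+C$, where $C$ is the permutation matrix of the antipodal involution $x\mapsto\bar x$, one sees that $A(Q_n)$ is the Cayley adjacency matrix for the connection set $\{e_1,\ldots,e_n\}$ (the standard basis vectors of $\mathbb{Z}_2^n$) while $C$ is the Cayley adjacency matrix for the singleton connection set $\{\mathbf{1}\}$. Their sum is therefore the Cayley matrix for $\{e_1,\ldots,e_n,\mathbf{1}\}$, i.e.\ exactly $FQ_n$. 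The standard spectral theory of Cayley graphs on abelian groups then provides a common eigenbasis: for each subset $S\subseteq\{1,\ldots,n\}$, the character $\chi_S(x)=(-1)^{\sum_{i\in S}x_i}$ is simultaneously an eigenvector of $A(Q_n)$ and of $C$.

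The first computation is to evaluate $A(Q_n)$ on $\chi_S$, which yields $\sum_{k=1}^{n}\chi_S(e_k)=(n-|S|)-|S|=n-2|S|$, so one recovers Theorem \ref{b.1} as a byproduct. The second is the eigenvalue of $C$ on $\chi_S$, which is simply $\chi_S(\mathbf{1})=(-1)^{|S|}$. Adding gives the eigenvalue of $A(FQ_n)$ on $\chi_S$ equal to $(n-2|S|)+(-1)^{|S|}=\lambda_{|S|}+(-1)^{|S|}=\theta_{|S|}$, and since the number of subsets $S$ with $|S|=i$ is $\binom{n}{i}$, each $\theta_i$ appears with the claimed multiplicity. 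Summing multiplicities gives $2^n$, which matches $|V(FQ_n)|$, confirming that the full spectrum has been accounted for.

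I do not expect a real obstacle here; the only point that deserves care is the identification of the antipodal matching $C$ with the Cayley matrix of $\{\mathbf{1}\}$, after which everything reduces to the character evaluations above. If one prefers to avoid Cayley-graph language, the same argument runs through the tensor factorization: with $A_1=\bigl(\begin{smallmatrix}0&1\\ 1&0\end{smallmatrix}\bigr)$, one has $A(Q_n)=\sum_{k=1}^{n} I^{\otimes(k-1)}\otimes A_1\otimes I^{\otimes(n-k)}$ and $C=A_1^{\otimes n}$, and the tensor products of the $\pm1$-eigenvectors of $A_1$ provide a common eigenbasis leading to the same eigenvalues and multiplicities.
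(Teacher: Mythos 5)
Your proposal is correct, and it reaches the same decomposition $A(FQ_n)=A(Q_n)+C$ with a common eigenbasis, but by a genuinely different route. The paper does not use characters of $\mathbb{Z}_2^n$ at all: it invokes the fact that $Q_n$ is distance-regular and lies in the Hamming association scheme, identifies $C$ with the distance-$n$ adjacency matrix $A_n$ of that scheme, concludes that $A(Q_n)$ and $C$ commute and are simultaneously diagonalizable, and then evaluates the action of $A_n$ on the $i$-th eigenspace $V_i$ via the Krawtchouk polynomial identity $K_n(i)=(-1)^i$. You instead exhibit the common eigenbasis explicitly as the characters $\chi_S(x)=(-1)^{\sum_{i\in S}x_i}$ of $\mathbb{Z}_2^n$, read off the eigenvalue of $A(Q_n)$ as $n-2|S|$ and that of $C$ as $\chi_S(\mathbf{1})=(-1)^{|S|}$, and add. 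The two arguments are secretly the same (the span of the $\chi_S$ with $|S|=i$ \emph{is} the eigenspace $V_i$), but yours is more elementary and self-contained: it reproves Theorem \ref{b.1} along the way and replaces the citation to association-scheme machinery and the Krawtchouk computation with a two-line character evaluation; the key observation you correctly flag as needing care, namely that $C$ is the Cayley matrix of the singleton $\{\mathbf{1}\}$, is immediate from the definition of the complementary edges. What the paper's heavier approach buys is generality: it works verbatim whenever one adds a distance matrix $A_d$ of an association scheme to a distance-regular graph, which is the template the authors reuse for $EH(n,2^n)$ later, where no such clean character basis over $\mathbb{Z}_2^n$ is available. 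One cosmetic point common to both proofs: for odd $i$ one has $\theta_i=\theta_{i+1}$, so the listed multiplicities $\binom{n}{i}$ are multiplicities of the indexed family rather than of distinct eigenvalues; your check that they sum to $2^n$ is the right sanity test either way.
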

\begin{proof}
	Let $\Gamma=Q_n$ and $\Lambda=FQ_n$.  First, we show that all the eigenvalues of  the folded  $FQ_n$  are 
	$$
	\theta_{i}=\lambda_{i}+(-1)^i \quad \text{for } 0\leq i\leq n,
	$$
	and then we show that the multiplicity $m(\theta_i)=\binom{n}{i}$, for $0\leq  i\leq n$. 
	For this purpose, let $A_\Gamma$ and $A_\Lambda$ be the adjacency matrices of the hypercube  $Q_n$ and folded hypercube  $FQ_n$, respectively. Hence, 
	$A_\Lambda=A_\Gamma+ D$, where $D$ is equal to the  $(q)\times(q)$ matrix
	\begin{center}
		$D=\begin{bmatrix}
		0 & 0 &0 &0 &...&0&0&0&1 \\
		0 & 0 & 0&0&...&0&0&1&0 \\
		0& 0 & 0&0& &0&1&0&0 \\
		&  & &...& \\
		&  & &  & \\
		0& 0 &1&0 &...&0 &0&0&0 \\
		0&1&0&0  & ...&0&0&0&0 \\
		1&0&0& 0 & ...& 0&0&0&0 \\
		\end{bmatrix},$
	\end{center}
	where $q=2^n$. It is well known that, hypercube  $Q_n$ is a distance-transitive graph, and hence hypercube $Q_n$ is a distance regular graph
	with the diameter $n$, see  ~\cite{Brower-1}. Hence hypercube $Q_n$ has $n+1$ eigenspace $V_0, ..., V_n$ so that 
	$dim(V_i)=\binom{n}{i}$, for $0\leq  i\leq n$. Based on \cite{Bannai-1,Delsarte-1}, hypercube  $Q_n$ belongs to a symmetric association
	scheme generated by distance adjacency matrices $A_0, A_1, \dots, A_n$, then $A_n=D$. Therefore, $A_\Gamma$ and $D$ are commute and  diagonalizable, hence they are simultaneously diagonalizable, and hence each eigenspace $V_i$ is invariant on $D$. Also, from Page 209 of \cite{Bannai-1}, 
	the eigenvalues of $A_n=D$ on $V_i$ in hypercube  $Q_n$ are given by krawtchouk polynomial
	$$
	K_n(i) = \sum_{j=0}^{n} (-1)^j (q-1)^{n-j}\binom{i}{j} \binom{n-i}{n-j}.
	$$
	We can verify that, $\binom{i}{j}=0$ if $j>i$ and  $\binom{n-i}{n-j}=0$ if $i>j$. Hence, if $i=j$, 
	then we have $K_n(i)=(-1)^i$. So $D$ acts as a scalar on each $V_i$, specifically $D|_{V_i} = (-1)^i \cdot I$. 
	Thus the eigenvalues of $D$ on the $i^{th}$ eigenspace of  $A_\Gamma$ are $(-1)^i$.
	Now, let $Z=\{z_1, z_2, ... , z_{q}\}$  such that each $z_i\in Z$ be an eigenvector of $A_\Gamma$ and $D$. 
	Hence, for any eigenvector $z_i\in Z$ in the $i^{th}$ eigenspace of $A_\Gamma$ we have $A_\Gamma z_i=\lambda_iz_i$ and $Dz_i=(-1)^iz_i$, 
	therefore 
	$$
	A_\Lambda z_i=(\lambda_i+(-1)^i)z_i.
	$$ 
	Since $D$ acts as a scalar on this eigenspace, then the multiplicity $m(\theta_i)=\binom{n}{i}$, for $0\leq  i\leq n$. \qed
\end{proof}

\section{Extended Hamming graphs}

In this section, we obtain the diameter, the Cayley graph property, the non-distance-regularity, and the full spectrum calculation of $EH(n,2^n)$. We start with diameter of $EH(n,2^n)$.

\begin{theorem}\label{c.1}
The diameter of the extended Hamming graph $EH(n,2^n)$ is $n$.
\end{theorem}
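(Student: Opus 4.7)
\smallskip
\noindent\textbf{Proof plan.} The upper bound $\mathrm{diam}(EH(n,2^n))\le n$ is immediate since $EH(n,2^n)$ is a spanning supergraph of $H(n,2^n)$, and the Hamming graph $H(n,2^n)$ -- being the Cartesian product of $n$ copies of $K_{2^n}$ -- has diameter $n$. So the whole content of the theorem lies in the matching lower bound: exhibiting a pair of vertices whose distance in $EH(n,2^n)$ is still $n$.

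The natural candidate is $u=(0,\ldots,0)$ and $v=(1,1,\ldots,1)$, with $d_H(u,v)=n$. To show that the complementary edges cannot shortcut it, the plan is to first establish the general distance formula
\[
d_{EH}(x,y)=\min\bigl\{d_H(x,y),\,1+d_H(x^c,y)\bigr\}
\]
and then evaluate both terms for this pair. The ``$\le$'' direction is trivial: either use a shortest Hamming path, or traverse the complementary edge $x$--$x^c$ and then a shortest Hamming path from $x^c$ to $y$. For ``$\ge$'', encode any walk of length $L$ in $EH(n,2^n)$ as a sequence of operations applied to the starting vertex, where each operation is either a Hamming move $h_{i,a}$ (set coordinate $i$ to the value $a$) or the global complementation $c(x)=(2^n-1-x_1,\ldots,2^n-1-x_n)$; let $k$ be the number of complementations used. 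The key observation is the conjugation identity $h_{i,a}\circ c = c\circ h_{i,2^n-1-a}$, which lets one push every copy of $c$ to the left of the sequence without changing its length. If $k$ is even then $c^k$ collapses to the identity and the remaining $L-k$ Hamming moves witness $d_H(x,y)\le L-k$, hence $L\ge d_H(x,y)$. If $k$ is odd they witness $d_H(x,y^c)\le L-k$, equivalently $d_H(x^c,y)\le L-k$, hence $L\ge 1+d_H(x^c,y)$.

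Applying the formula finishes the job: for $n\ge 2$ one has $u^c=(2^n-1,\ldots,2^n-1)$, and since $2^n-1\ne 1$ every coordinate of $u^c$ differs from the corresponding coordinate of $v$, so $d_H(u^c,v)=n$ as well; therefore $d_{EH}(u,v)=\min\{n,n+1\}=n$. The degenerate case $n=1$ reduces to $EH(1,2)=K_2$, which has diameter $1=n$. The main obstacle is not the choice of witness but the verification that no mixture of complementary and Hamming edges beats the direct Hamming path; the conjugation identity above is the device that makes this rigorous, and the rest is routine bookkeeping.
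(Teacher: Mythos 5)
Your proposal is correct and follows the same overall strategy as the paper: the upper bound from the spanning subgraph $H(n,2^n)$, the witness pair $u=(0,\dots,0)$, $v=(1,\dots,1)$, and the distance formula $d_{EH}(x,y)=\min\{d_H(x,y),\,1+d_H(x^c,y)\}$. The substantive difference is that you actually \emph{prove} the lower-bound half of that formula, whereas the paper merely asserts it: the paper writes down the path $u\,u^c\cdots v$ and then declares $d_{EH}(u,v)=\min\{d_H(u,v),1+d_H(u^c,v)\}$ without ruling out shorter mixed walks that use a complementary edge somewhere in the middle, or several of them. Your conjugation identity $h_{i,a}\circ c=c\circ h_{i,2^n-1-a}$, which pushes all complementations to one end of the walk without changing its length (legality of each move is preserved because $c$ is a bijection on each coordinate, and $c$ is always available since $2u_i=2^n-1$ has no integer solution, so $x\neq x^c$ for every vertex), is exactly the missing justification; together with the observation that $c$ is an automorphism of $H(n,2^n)$, so $d_H(x,y^c)=d_H(x^c,y)$, it makes the parity argument airtight. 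You also correctly verify $d_H(u^c,v)=n$ for $n\ge 2$ (since $2^n-1\neq 1$) and dispose of the degenerate case $n=1$, which the paper does not address. In short, same route, but your version closes a genuine gap in the published argument.
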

\begin{proof}
Let $\Omega=\{0, 1, ..., 2^{n}-1\}$. We can verify that the diameter of the extended Hamming  graph $EH(n,2^n)$ is at most $n$, because the 
Hamming  graph $H(n,2^n)$ is a subgraph of extended Hamming  graph $EH(n,2^n)$, and the diameter of the Hamming  graph $H(n,2^n)$ is equal to $n$. 
Now, if we consider two distinct $n$-tuple vertices
$u=(0, ..., 0)$ and  $v=(1, ..., 1)$ in Hamming  graph $H(n,2^n)$, then the distance between two distinct $n$-tuple vertices $u=(0, ..., 0)$ and  
$v=(1, ..., 1)$ is $n$, that is $d_H(u,v)=n$. Since in the extended Hamming  graph $EH(n,2^n)$, the $n$-tuple $u=(0, ..., 0)$ is adjacent
to $n$-tuple $u^c=(2^n-1, ..., 2^n-1)$, then there is a path $uu^c ...v$ so that 
$$
d_H(u,v)=d_H(u,u^c)+d_H(u^c,v)=1+d_H(u^c,v).
$$ 
Hence,  
$$
d_{EH}(u,v)= min\{d_H(u,v),1+d_H(u^c,v)\}=min\{n,1+n\}.
$$ 
Therefore, the diameter of the extended Hamming  graph $EH(n,2^n)$ is $n$.\qed
\end{proof}
\begin{theorem}\label{c.2}
The extended Hamming graph $EH(n,2^n)$ is a Cayley graph. 
\end{theorem}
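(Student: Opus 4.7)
The plan is to exhibit $EH(n, 2^n)$ explicitly as a Cayley graph on the elementary abelian $2$-group $G = (\mathbb{Z}_2^n)^n \cong \mathbb{Z}_2^{n^2}$, whose order matches $|V(EH(n, 2^n))| = 2^{n^2}$.

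First, I would set up a dictionary between the symbol alphabet $\Omega = \{0, 1, \dots, 2^n - 1\}$ and $\mathbb{Z}_2^n$ via binary expansion, $x \longleftrightarrow (x^{(0)}, \dots, x^{(n-1)})$ with $x = \sum_{j=0}^{n-1} x^{(j)} 2^{j}$. Applying this coordinate-wise yields a bijection $\phi \colon V(EH(n, 2^n)) \to G$. The key observation is that under $\phi$ the arithmetic complement $x \mapsto 2^n - 1 - x$ on $\Omega$ coincides with bitwise complement on $\mathbb{Z}_2^n$, which is simply translation by the all-ones vector $\mathbf{1} \in \mathbb{Z}_2^n$. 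Consequently the global complement $u \mapsto u^c$ becomes translation by the fixed element $\widehat{\mathbf{1}} = (\mathbf{1}, \dots, \mathbf{1}) \in G$.

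Next I would write down the candidate connection set
\[
S = S_H \cup \{\widehat{\mathbf{1}}\}, \qquad S_H = \{\, g \in G : g \text{ has exactly one nonzero block among its } n \text{ blocks}\,\}.
\]
The set $S_H$ has cardinality $n(2^n - 1)$ and is the $\phi$-image of the Hamming neighbors of $0 \in V$, while $\widehat{\mathbf{1}}$ is the $\phi$-image of $0^c$. Every nonzero element of $G$ has order $2$, so $S = S^{-1}$, and $0 \notin S$; thus $S$ is a legitimate Cayley connection set. I would then verify that $\phi$ carries $E(EH(n, 2^n))$ onto the Cayley adjacency $\phi(u) \sim \phi(v) \iff \phi(u) + \phi(v) \in S$: a case split on whether $\phi(u) + \phi(v) \in S_H$ or $\phi(u) + \phi(v) = \widehat{\mathbf{1}}$ recovers exactly the Hamming-edge condition (the tuples differ in a single outer coordinate) and the complementary-edge condition ($v = u^c$), yielding $EH(n, 2^n) \cong \operatorname{Cay}(G, S)$.

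The main conceptual hurdle is choosing the correct group. The naive candidate $\mathbb{Z}_{2^n}^n$ fails because $u^c - u = -2u - \mathbf{1}$ depends on $u$, so complementation is not translation by a fixed element and only the $2$-torsion translations survive as automorphisms. Replacing the cyclic factor $\mathbb{Z}_{2^n}$ by the elementary abelian $\mathbb{Z}_2^n$ on the \emph{same} underlying set $\Omega$ is harmless — the Hamming graph is built from complete graphs $K_{2^n}$, whose automorphism group is all of $S_{2^n}$ and hence accommodates any regular action on $\Omega$ — and this substitution turns complementation into a fixed translation, in direct analogy with the folded hypercube $FQ_n$ realized as $\operatorname{Cay}(\mathbb{Z}_2^n, \{e_1, \dots, e_n, \mathbf{1}\})$. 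Once the dictionary $\phi$ is in place, the remaining verification is routine bookkeeping.
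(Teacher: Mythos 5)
Your proposal is correct and follows essentially the same route as the paper: both realize $EH(n,2^n)$ as $\operatorname{Cay}(G,S)$ on the elementary abelian $2$-group of order $2^{n^2}$ (your $(\mathbb{Z}_2^n)^n$ is the additive group $(\mathbb{F}_{2^n})^n$ used in the paper), with $S$ consisting of the elements having exactly one nonzero block together with the all-ones element. Your binary-expansion dictionary is simply an explicit instance of the paper's postulated bijection $\varphi$ with $\varphi(2^n-1-x)=\varphi(x)+1$, and your closing remark on why $\mathbb{Z}_{2^n}^n$ fails is a nice (correct) addition not present in the paper.
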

\begin{proof}
Let $\Gamma=EH(n,2^n)$, and let $m=2^n$. There is a unique field of order $2^n$ we denoted by 
$\mathbb{F}_{2^n}$ so that $(\mathbb{F}_{2^n}, +)$ is abelian group of 
order $2^n$ and $(\mathbb{F}_{2^n})^*=\mathbb{F}_{2^n}-\{0\}$ is a cyclic group of order $2^n-1$. 
Also, let $G$ be the additive  group $(\mathbb{F}_{2^n})^n$, that is $G=(\mathbb{F}_{2^n})^n$, where $(\mathbb{F}_{2^n})^n$ which means
the direct product of $n$ copies of the additive group $(\mathbb{F}_{2^n},+)$. Hence each element of $G$ has order $2$,
in other words $G$ is an elementary abelian $2$-group. Now, let $e_i$ be a $n$-tuple so that $i^{th}$ coordinate of $e_i$ is 
equal to $1$ and other coordinates of $e_i$ is equal to $0$, that is  
$$
e_i=(0, ...0, 1,0, ... 0),
$$ 
which means $e_i=(0, ...0, 1,0, ... 0)$ is the multiplicative identity in $i^{th}$ copy of the direct product of $G$. 
So, if we consider the set $S$ as follows:
$$
S = \{ g e_i=g(0, ...0, 1,0, ... 0) \mid 1 \leq i \leq n,\ g \in \mathbb{F}_{2^n}-\{0\} \} \cup \{(1, ... ,1) \},
$$
where $(1, ... ,1)\in(\mathbb{F}_{2^n})^n$, then we can verify that the set $S$ is inverse closed subset of $G-(0, ..., 0)$, 
because $G$ is an elementary abelian $2$-group and hence for each $g\in G$ we have $2g=(0, ..., 0)$.
Also, we can verify that the cardinality of the set $S$ is 
equal to the valency of the extended Hamming  graph $EH(n,2^n)$. Now, we show that the extended Hamming  graph $EH(n,2^n)$  
is isomorphic to Cayley graph $Cay(G, S)$ is defined already. For this purpose, let $\Omega=\{0, 1, ..., 2^{n}-1\}$, and
let $\varphi: \Omega \to \mathbb{F}_{2^n}$ be a bijection satisfying
$$
\varphi(2^n - 1 - x) = \varphi(x) + 1 \quad \text{for all } x \in \Omega.
$$
Now, if we define the map $\Phi: \Omega^n \to G$ by rule
$$
\Phi(x_1, \dots, x_n) = (\varphi(x_1), \dots, \varphi(x_n)),
$$
then we can verify that $\Phi$ is an isomorphism between the extended Hamming graph $EH(n,2^n)$ and the Cayley graph $Cay(G, S)$.
\qed
\end{proof}
\begin{proposition} \label{c.3}
The extended Hamming graph $EH(n,2^n)$ cannot be a distance regular graph.   
\end{proposition}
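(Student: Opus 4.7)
The plan is to refute distance regularity by exhibiting two pairs of vertices at the same graph distance whose backward intersection numbers $c_r = |\Gamma_{r-1}(v)\cap\Gamma_1(u)|$ take different values. I would work at the diameter $r=n$ from a fixed vertex, since $b_n=0$ pushes any discrepancy directly into $c_n$.

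The first step is to establish the distance formula
\[
d_{EH}(x,y) \;=\; \min\bigl\{\,d_H(x,y),\ 1 + d_H(x^c,y)\,\bigr\},
\]
where $d_H$ denotes Hamming distance. I would prove this by tracking along any $EH$-walk from $x$ to $y$ the parity $\epsilon$ of complementary edges used so far, and sending the running vertex $w$ to $w^c$ whenever $\epsilon=1$; the resulting trajectory is a Hamming walk of length equal to the $EH$-length minus the number of complementary edges used. Hence an $EH$-walk of length $m$ using $k$ complementary edges satisfies $m\ge k+d_H(x,y)$ if $k$ is even and $m\ge k+d_H(x^c,y)$ if $k$ is odd, so the minimum over all $k$ is attained at $k=0$ or $k=1$.

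The second step is to fix $u=(0,\ldots,0)$ and take
\[
v_1 = (1,1,\ldots,1) \qquad\text{and}\qquad v_2 = (2^n-1,\,1,\,1,\ldots,1).
\]
A direct check with the distance formula gives $d_{EH}(u,v_1)=d_{EH}(u,v_2)=n$ for every $n\ge 2$, since both Hamming distances from $u$ equal $n$, while $d_H(u^c,v_1)=n$ and $d_H(u^c,v_2)=n-1$, so the alternative routes through $u^c$ cost $n+1$ and $n$ respectively.

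The final and main step is to enumerate the neighbors of $u$ that lie at distance $n-1$ from each $v_j$. The neighbors of $u$ consist of the Hamming neighbors $ae_i$ (for $a\in\{1,\ldots,2^n-1\}$ and $e_i$ the $i$th standard vector) together with $u^c$. For each candidate $w$ I would compute $d_H(w,v_j)$ and $d_H(w^c,v_j)$ and apply the distance formula. The expected outcome is: for $v_1$, only the $n$ vectors $e_1,\ldots,e_n$ reach distance exactly $n-1$, giving $c_n=n$; for $v_2$, the single asymmetric coordinate $2^n-1$ admits extra shortcuts, so the qualifying neighbors are $u^c$, the vector $(2^n-1)e_1$, and both $e_i$ and $(2^n-2)e_i$ for each $i\ge 2$ (the latter reaching $v_2$ through its own complement), giving $c_n=1+1+2(n-1)=2n$. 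Since $n\ne 2n$, the intersection number $c_n$ is not a function of distance alone, contradicting distance regularity. The main obstacle is the bookkeeping for $v_2$: one must systematically rule out extra shortcuts for intermediate values of $a$ (for instance, verifying that $w=ae_i$ with $a\notin\{1,2^n-2\}$ and $i\ge 2$ lies at $EH$-distance exactly $n$, not $n-1$, from $v_2$), which requires patient case analysis but is otherwise routine.
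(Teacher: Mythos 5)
Your argument is correct, and it takes a genuinely different route from the paper. The paper works at distance $1$: it fixes $u=(0,\dots,0)$, takes the Hamming neighbour $v=(1,0,\dots,0)$ and the antipode $u^c$, and asserts (without computation) that $|\Gamma_2(u)\cap\Gamma_1(v)|\neq|\Gamma_2(u)\cap\Gamma_1(u^c)|$, i.e.\ that the intersection number $b_1$ is not well defined. You instead work at the diameter and compare $c_n$ for two vertices that are both at distance $n$ from $u$ but reach $u$ by structurally different shortest routes. I checked your enumeration and it is right: with the distance formula $d_{EH}(x,y)=\min\{d_H(x,y),\,1+d_H(x^c,y)\}$ (whose walk-folding proof is sound, since complementation is an automorphism of $H(n,2^n)$), the qualifying neighbours of $u$ are exactly $e_1,\dots,e_n$ for $v_1$, giving $c_n=n$, and $u^c$, $(2^n-1)e_1$, and $e_i,(2^n-2)e_i$ for $i\ge 2$ for $v_2$, giving $c_n=2n$. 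Your approach costs more bookkeeping (you must justify the distance formula and rule out intermediate values of $a$), but it buys a fully verifiable count and, notably, a witness that works for every $n\ge 2$: the paper's specific pair actually fails at $n=2$, where a direct check in $EH(2,4)$ gives $|\Gamma_2(u)\cap\Gamma_1(v)|=|\Gamma_2(u)\cap\Gamma_1(u^c)|=4$ (the paper's inequality only holds for $n\ge 3$, where the two counts are $(n-1)(2^n-1)+1$ and $n(2^n-1)$). So your proof is not only valid but covers a case the paper's stated witness does not; it would be worth stating explicitly that you assume $n\ge 2$, since $EH(1,2)$ degenerates.
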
 
\begin{proof}
Let  $\Gamma=EH(n,2^n)$, and let 
$$
\Gamma_r(u)=\{\Gamma_0(u), ... , \Gamma_{n}(u)\},
$$
be the distance partition of $\Gamma$ for each $n$-tuple $u$ of $\Gamma$, where $r$ is a non-negative integer not exceeding $n$, 
the diameter of $\Gamma$. Without loss of generality if we consider the $n$-tuple vertex $u=(0, ...,0)$ in the extended Hamming graph
$\Gamma$, then its neighbors are the $n$-tuple vertices so that differ in exactly one coordinate and its complement say 
$u^c=(2^n-1, ...,2^n-1)$. 
So, if we take $v=(1, ...,0)$, then we have $d_{EH}(u,v)=d_{EH}(u,u^c)=1$, and hence we can verify that 
$$
|\Gamma_{2}(u)\cap \Gamma_{1}(v)|\neq|\Gamma_{2}(u)\cap \Gamma_{1}(u^c)|.
$$
Thus, the extended Hamming  graph $EH(n,2^n)$ cannot be a distance regular graph. \qed
\end{proof}
\begin{theorem}\label{c.4}{\rm(\cite{{Brower-1}}, Page 261)}
The spectrum of the  Hamming graph $H(n,2^n)$,  is
$$
\{\lambda_{i}^{m(\lambda_{i})}|\ \lambda_{i}=2^n(n-i)-n,\ m({\lambda_i})=\binom{n}{i}(2^n-1)^i,\ 0\leq i\leq n\},
$$
\end{theorem}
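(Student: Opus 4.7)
The plan is to exploit the Cartesian product structure $H(n,2^n)=K_{2^n}\square K_{2^n}\square\cdots\square K_{2^n}$ ($n$ factors), recalled in the introduction, and combine it with the tensor-sum formula for the adjacency matrix of a Cartesian product recorded in Section~2. Once those two ingredients are in hand, the statement reduces to organising sums of factor eigenvalues by a single counting parameter.

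First I would record the spectrum of $K_m$: its adjacency matrix is $J_m-I_m$, giving the simple eigenvalue $m-1$ with eigenvector $\mathbf{1}$, and the eigenvalue $-1$ of multiplicity $m-1$ whose eigenspace is $\mathbf{1}^\perp$. Setting $m=2^n$ yields the two eigenvalues $2^n-1$ and $-1$ with multiplicities $1$ and $2^n-1$ respectively, and an orthonormal basis of eigenvectors $\{v^{(1)},\ldots,v^{(2^n)}\}$.

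Next I would iterate the identity $A(G\square H)=A(G)\otimes I+I\otimes A(H)$ given in the preliminaries. A standard Kronecker-product computation shows that if $v_k$ is an eigenvector of $A(K_{2^n})$ with eigenvalue $\mu_k\in\{2^n-1,-1\}$ for $k=1,\ldots,n$, then $v_1\otimes\cdots\otimes v_n$ is an eigenvector of $A(H(n,2^n))$ with eigenvalue $\mu_1+\cdots+\mu_n$. Taking all $(2^n)^n$ tensor products of basis vectors produces a basis of the whole ambient space, so the multiset of eigenvalues of $H(n,2^n)$ is exactly the multiset of such sums.

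Finally I would group these sums by the number $i$ of indices $k$ for which $\mu_k=-1$. Any such choice produces the value $(n-i)(2^n-1)-i=2^n(n-i)-n=\lambda_i$, and the contribution to the eigenspace dimension is $\binom{n}{i}(2^n-1)^i$, since there are $\binom{n}{i}$ ways to pick which factors contribute a $(-1)$-eigenvector and each such factor supplies a $(2^n-1)$-dimensional eigenspace while the remaining factors supply a one-dimensional one. I do not expect a genuine obstacle; the only place requiring a moment of care is checking that all the tensor-basis vectors are linearly independent and that the total multiplicity is correct, which follows from the binomial identity $\sum_{i=0}^{n}\binom{n}{i}(2^n-1)^i=(2^n)^n$.
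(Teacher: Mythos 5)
Your proof is correct. Note that the paper does not prove this statement at all---it is quoted directly from Brouwer--Cohen--Neumaier---so there is no internal proof to compare against; your derivation via the Kronecker-sum formula $A(G\square H)=A(G)\otimes I+I\otimes A(H)$ and the spectrum of $K_{2^n}$ is the standard one, and it is precisely the decomposition the paper itself relies on later in the proof of Theorem \ref{c.5} (eigenvectors $w_1\otimes\cdots\otimes w_n$ with each $w_k$ in the span of $\mathbf{1}$ or in its orthogonal complement). The arithmetic checks out: $(n-i)(2^n-1)-i=2^n(n-i)-n$, the values $\lambda_i$ are pairwise distinct since they are strictly decreasing in $i$, and the total multiplicity $\sum_{i=0}^{n}\binom{n}{i}(2^n-1)^i=(2^n)^n$ accounts for all vertices, so no eigenvalue is missed.
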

\begin{theorem}\label{c.5}
The spectrum of the extended Hamming graph $EH(n,2^n)$,  is
$$
\{\theta_{i,t}^{m({\theta_{i,t}})}|\ \theta_{i,t}=\lambda_{i}+(-1)^t,\ m({\theta_{i,t}})=\binom{n}{i}\binom{i}{t}2^{(n-1)t}(2^{(n-1)}-1)^{(i-t)},\ 0\leq i\leq n\,\  0\leq t\leq i\},
$$
where $\lambda_{i}$ is an eigenvalue of Hamming graph $H(n,2^n)$.
\end{theorem}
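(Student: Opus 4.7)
The plan is to mimic the structural idea of Theorem~\ref{b.2}---write $A_{EH(n,2^n)} = A_H + D$ for a suitable commuting involution $D$ and diagonalize them simultaneously---but to exploit the Cartesian product structure of $H(n,2^n)$ rather than its association scheme. Write $m = 2^n$ and let $D$ be the permutation matrix of the complementation map $u \mapsto u^c$. Since complementation negates coordinate differences it sends adjacent vertices to adjacent vertices, i.e.\ it is a graph automorphism of $H(n,m)$, so $DA_H = A_H D$. Thus $A_H$ and $D$ admit a common orthonormal eigenbasis, and on each common eigenvector the eigenvalue of $A_H + D$ is the sum of the two eigenvalues.

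Unlike the folded hypercube case, $D$ here is \emph{not} a distance matrix of $H(n,m)$: the vertex $u^c$ is only one of $(m-1)^n$ vertices at distance $n$ from $u$. Instead I would use the Cartesian product decomposition $H(n,m) = K_m \square \cdots \square K_m$, which gives
$$
A_H \;=\; \sum_{k=1}^n I^{\otimes (k-1)} \otimes A_{K_m} \otimes I^{\otimes (n-k)},
$$
together with the coordinate-wise factorization $D = P^{\otimes n}$, where $P$ is the $m \times m$ permutation matrix on $K_m$ corresponding to $x \mapsto m-1-x$. So it suffices to analyze the commuting pair $(A_{K_m}, P)$ on a single copy of $K_m$ and then lift by tensoring.

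For $K_m$ with $m = 2^n$, the matrix $A_{K_m} = J - I$ has eigenvalue $m-1$ on the line spanned by $\mathbf{1}$ (on which $P$ acts as $+1$) and eigenvalue $-1$ on the $(m-1)$-dimensional subspace of mean-zero functions. The key step is decomposing this second eigenspace under $P$. Because $m$ is even, $x \mapsto m-1-x$ has no fixed point and pairs up the coordinates; the $(-1)$-eigenspace of $P$ consists of antisymmetric functions on these $m/2$ pairs, has dimension $2^{n-1}$, and lies entirely inside the mean-zero space. The $(+1)$-eigenspace of $P$ has dimension $m/2$ and meets the mean-zero space in the symmetric functions that also sum to zero, a subspace of dimension $2^{n-1}-1$. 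I expect this dimension count to be the main technical point of the proof, but it is straightforward once the pairing is fixed.

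Lifting by tensor products, every joint eigenvector of $A_H$ and $D$ corresponds to a choice, for each of the $n$ coordinates, of one of three types: (A) $A_{K_m}$-value $m-1$, $P$-value $+1$, contributing dimension $1$; (B) $A_{K_m}$-value $-1$, $P$-value $+1$, contributing dimension $2^{n-1}-1$; (C) $A_{K_m}$-value $-1$, $P$-value $-1$, contributing dimension $2^{n-1}$. If $i$ coordinates are of type B or C and $t$ of those are of type C, the $A_H$-eigenvalue is $(n-i)(m-1) - i = 2^n(n-i)-n = \lambda_i$ and the $D$-eigenvalue is $(-1)^t$, so $A_{EH}$ has eigenvalue $\theta_{i,t} = \lambda_i + (-1)^t$ with multiplicity $\binom{n}{i}\binom{i}{t}\,2^{(n-1)t}(2^{n-1}-1)^{i-t}$, exactly as claimed. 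A final sanity check, namely that these multiplicities sum to $(2^n)^n = 2^{n^2}$, follows from two applications of the binomial theorem and confirms that no eigenspaces have been missed.
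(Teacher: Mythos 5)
Your proposal is correct and follows essentially the same route as the paper: both write $A_{EH}=A_H+D$ with $D=P^{\otimes n}$ the complementation involution, use the Kronecker-sum form of $A_H$ over the factors $K_{2^n}$, split the $(-1)$-eigenspace of $J-I$ into the $(+1)$- and $(-1)$-eigenspaces of $P$ of dimensions $2^{n-1}-1$ and $2^{n-1}$, and count tensor factors to get the multiplicities. Your closing dimension check $\sum_{i,t} m(\theta_{i,t})=2^{n^2}$ is a nice addition not present in the paper, and your justification of the two key dimensions via the fixed-point-free pairing is if anything cleaner than the paper's.
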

\begin{proof}
Let $\Lambda=H(n,2^n)=K_{2^n} \square \cdots \square K_{2^n},$
and let $\Gamma=EH(n,2^n)$. By the following cases first we show that $\Gamma$ is an integral graph
and then we compute the multiplicity of each eigenvalue of $\Gamma$.

Case 1.
We show that the extended Hamming graph $EH(n,2^n)$  is an integral graph.

Let $A_\Lambda$, $A_\Gamma$ and $A_C$ be the adjacency matrices of graphs  $\Lambda$, $\Gamma$ and the complementary edges of graph $\Gamma=EH(n,2^n)$, respectively. 
Hence, $A_\Gamma=A_\Lambda+ A_C$, where $A_C$ is equal to the  $2^{(n^2)}\times2^{(n^2)}$ matrix
\begin{center}
			$A_C=\begin{bmatrix}
			0 & 0 &0 &0 &...&0&0&0&1 \\
			0 & 0 & 0&0&...&0&0&1&0 \\
			0& 0 & 0&0& &0&1&0&0 \\
			&  & &...& \\
			&  & &  & \\
			0& 0 &1&0 &...&0 &0&0&0 \\
			0&1&0&0  & ...&0&0&0&0 \\
			1&0&0& 0 & ...& 0&0&0&0 \\
\end{bmatrix}.$
\end{center}
We can verify that $A_\Lambda$ and $A_C$ are commute and diagonalizable, hence they are simultaneously diagonalizable. 
So, if $V_i$ is an eigenspace of $A_\Lambda$ for an eigenvalue $\lambda_i(\Lambda)$ with dimension $dim(V_i)$, 
then $A_C$ preserves each $V_i$. Since, $(A_C)^2=I$, then the eigenvalues of $A_C|_{V_i}=\pm1$. So if we consider 
$$
V=V^+\oplus V^-,
$$
where $V^{\pm}$ are $\pm1$ eigenspaces of $A_C$, then all the eigenvalues of 
the adjacency matrix $A_\Gamma$ of the extended Hamming graph $EH(n,2^n)$ are $\lambda_i\pm 1$. \\
 
Case 2. 
Suppose the multiplicity of $\lambda_i+ 1$ is equal to $m^{+}_i$, and the multiplicity of $\lambda_i- 1$ is equal to $m^{-}_i$. 
We show that, the spectrum of the extended Hamming graph $EH(n,2^n)$,  is
$$
\{\theta_{i,t}^{m({\theta_{i,t}})}|\ \theta_{i,t}=\lambda_{i}+(-1)^t,\ m({\theta_{i,t}})=\binom{n}{i}\binom{i}{t}2^{(n-1)t}(2^{(n-1)}-1)^{(i-t)},\ 0\leq i\leq n\,\  0\leq t\leq i\}.
$$
For this purpose, we can verify that the adjacency matrix of graph 
$$
\Lambda=K_{2^n} \square \cdots \square K_{2^n},
$$
can be written as 
$$
A_\Lambda = \sum_{k=1}^{n} I_{2^n}^{\otimes(k-1)} \otimes (J_{2^n} - I_{2^n}) \otimes I_{2^n}^{\otimes(n-k)},
$$
where $J_{2^n}$ is the all-ones matrix of size $2^n \times 2^n$, $I_{2^n}$ is the identity matrix, and $I_{2^n}^{\otimes k}$ denotes
the $k$-fold Kronecker product of $I_{2^n}$ with itself. Now, let 
$$
D=J_{2^n} - I_{2^n}
$$
be the adjacency matrix of complete graph $K_{2^n}$, and let $P_{2^n}$ be the permutation matrix representing
$$
x\mapsto 2^n-1-x,\qquad x\in\{0,1,\dots,2^n-1\}.
$$
Hence $P^2_{2^n}=I$, and hence all the eigenvalues of $P_{2^n}$ are $\mu_{p}=\pm1$ with the same multiplicity as $2^{n-1}$.
In particular, we can verify that $A_C=P_{2^n}^{\otimes n}$. Now we show that $A_C$ acts on $\prod_{p=1}^{n} \mu_p$.
Since, the adjacency matrix $D$ of complete graph $K_{2^n}$ has eigenvalues $2^n-1$ and $-1$ with the multiplicity 
$1$ and  $2^n-1$, respectively, then the eigenvector $1_{2^n}$ (all-ones) of the adjacency matrix $D$ is correspond to the eigenvalue $2^n-1$. So, if
$W_R\subseteq \mathbb{R}^{2^n}$ is a span of $1_{2^n}$ correspond to the eigenvalue $2^n-1$ of complete graph $K_{2^n}$, and 
$W_L\subseteq \mathbb{R}^{2^n}$ is a orthogonal complement correspond to the eigenvalue $-1$ of complete graph $K_{2^n}$, 
that is for each $w_l\in W_L$ we have $w_l 1_{2^n}=0$, then eigenvectors of  $A_\Lambda$ are
$$
w_1\otimes ...\otimes w_n,
$$
where $w_k\in W_R$ or $w_k\in W_L$. On the other hand, if
$u=(u_1, ..., u_n)$ is a vertex of  graph $H(n,2^n)$, then 
$$
\sigma=\sigma_1\otimes ...\otimes \sigma_n,
$$
by rule 
$$
\sigma(u)=(\sigma_1(u_1), ...,\sigma_n(u_n))=(2^n - 1 - u_1,...,2^n - 1 - u_n ),
$$ 
is an automorphism of graph $H(n,2^n)$, where $\sigma_r$ is a permutation matrix of $r$-factor  $ K_{2^n}$ of $\Lambda$. 
Since, the permutation matrix $P_{2^n}$ and the adjacency matrix $D$ of complete graph $K_{2^n}$ are commute and diagonalizable, 
hence they are simultaneously diagonalizable. Therefore, $P_{2^n}$ preserves the eigenspaces of the adjacency matrix $D$, 
and hence the  permutation $\sigma_r$ preserves $W_R$ and $W_L$. Thus,
$$
\sigma_r|{W_R}=1 \text{ and } \sigma_r|{W_L}=\pm 1,
$$
that is the eigenvector $1_{2^n}$ in $W_R$ correspond to the eigenvalue $1$  of $P_{2^n}$, and each eigenvector of $W_L$ correspond
to the eigenvalues $1$ and $-1$ of $P_{2^n}$, with the multiplicity $2^{n-1}-1$ and $2^{n-1}$, respectively.
Now, let $W_L$ decomposes as 
$$
W_L = W_L^{+}\oplus W_L^{-},
$$ 
where
$$
\dim W_L^{+} = 2^{\,n-1}-1,\qquad \dim W_L^{-} = 2^{\,n-1},
$$
corresponding to the positive eigenvalues $1$  of $P_{2^n}$ with the multiplicity 
$2^{n-1}-1$ and the negative eigenvalue $-1$ of $P_{2^n}$ with the multiplicity 
$2^{n-1}$ for $\sigma_r$, respectively. Thus, $A_C$ acts on $\prod_{p=1}^{n} \mu_p$. 
Moreover in 
$$
\sigma=\sigma_1\otimes \cdots \otimes \sigma_n,
$$ 
if $i$ of $\sigma_r$ are in $W_L$ so that $t$ number of them are in $W_L^{-}$, that is eigenvalue $-1$ for $\sigma_r$, 
and $i-t$ of them  are in  $W_L^{+}$, 
that is eigenvalue $1$ for $\sigma_r$, then  $n-i$ of $\sigma_r$ are in $W_R$. So, eigenvalue of $A_C$ on tensor product is
$$
(+1)^{n-i}\cdot(+1)^{i-t}\cdot(-1)^{t}=(-1)^t,
$$
and so  for fixed $i$ and $t$, if
$$
\theta_{i,t}=\lambda_{i}+(-1)^t,
$$ 
is an eigenvalue of the extended Hamming graph $EH(n,2^n)$ then  we need to consider each choice of eigenvalue for each factor. 
There are $i$ positions in $W_L$ with $\binom{n}{i}$ choices, among these $i$ positions, there are $t$ positions in $W_L^{-}$, 
with $\binom{i}{t}$ choices and a basis vector in each chosen space: 
$$\begin{cases}
\dim(W_L^{-})=2^{\,n-1} &\text{ for each of the $t$ positions},\\[2pt]
\dim(W_L^{+})=2^{\,n-1}-1 &\text{ for each of the $i-t$ positions}.
\end{cases}
$$
Hence by multiplication principle we have
$$
m(\theta_{i,t})=\binom{n}{i}\binom{i}{t}\; 2^{(n-1)t}\,(2^{\,n-1}-1)^{\,i-t}.
$$ \qed
\end{proof}

\section{Conclusion}

In this paper, we introduced and explored the properties of the extended Hamming graph, denoted as $EH(n,2^{n})$, a novel graph structure derived from the well-known Hamming graph $H(n,2^{n})$. Our investigation was motivated by the desire to analyze how the addition of complementary edges impacts the algebraic and combinatorial characteristics of this important family of graphs.

We first revisited the spectrum of the folded hypercube $FQ_n$ using a new approach, confirming that its eigenvalues $\theta_i$ are related to the hypercube eigenvalues $\lambda_i$ by $\theta_i = \lambda_i + (-1)^i$ with multiplicity $m(\theta_i)=\binom{n}{i}$. 

The main focus, however, was on the extended Hamming graph $EH(n,2^{n})$. We established several key algebraic and structural properties of this new family of graphs:

\begin{enumerate}

\item We proved that the diameter of $EH(n,2^{n})$ is $n$.
\item We demonstrated that $EH(n,2^{n})$ is a Cayley graph, leveraging the direct product of the additive group $(\mathbb{F}_{2^{n}},+)$.
\item We showed that $EH(n,2^{n})$ is not a distance regular graph.
\item Finally, we determined the complete spectrum of $EH(n,2^{n})$, showing that its eigenvalues $\theta_{i,t}$ are $\lambda_i + (-1)^t$, where $\lambda_i$ are the eigenvalues of $H(n,2^{n})$. We explicitly calculated the multiplicity of each eigenvalue.
\end{enumerate} 
These results provide a foundation for further analysis of this new class of highly symmetric graphs. Future work could include exploring other properties, such as the  number of independent sets, dominating sets or the connectivity, and investigating relationship between parameters of $EH(n,2^n)$.

\section*{Funding and Conflict of interest} 
The authors have not disclosed any funding and declare no conflict of interest.


\bigskip
\end{document}